\newtheorem{theorem}{Theorem}[section]
\newtheorem{lemma}[theorem]{Lemma}
\newtheorem{observation}[theorem]{Observation}
\newtheorem{corollary}[theorem]{Corollary}
\theoremstyle{definition}
\newtheorem{remark}[theorem]{Remark}
\definecolor{darkblue}{rgb}{0,0,0.6}
\newcommand{\skalarProd}[2]{\big\langle#1,#2\big\rangle}
\newcommand{\ntimes}{\underline{\times}}
\newcommand{\nCurl}{\underline{\Curl}\,}
\newcommand{\ncurl}{\underline{\operatorname{curl}}\,}
\renewcommand{\D}{\operatorname{D}\hspace{-1pt}}
\begin{document}
\begin{tikzpicture}[remember picture, overlay]
 \node [xshift=-1cm,yshift=15cm,rotate=-90] at (current page.south east)
 {Comptes Rendus. Mathématique \textbf{359}, issue 6 (2021), doi: \href{https://doi.org/10.5802/crmath.216}{10.5802/crmath.216}.
 };
\end{tikzpicture}
\numberwithin{equation}{section}

 \title{$L^p$-versions of generalized Korn inequalities for incompatible tensor fields in arbitrary dimensions with $p$-integrable exterior derivative}
\knownauthors[lewintan]{lewintan,neff}

\subtitle{\LARGE Versions $L^p$ des in\'{e}galit\'{e}s g\'{e}n\'{e}ralis\'{e}es de Korn pour les champs de tenseurs incompatibles de dimension quelconque avec d\'{e}riv\'{e}e ext\'{e}rieure $p$-int\'{e}grable}

\maketitle

\begin{abstract}
For $n\ge2$ and $1<p<\infty$ we prove an $L^p$-version of the generalized Korn-type inequality for
incompatible, $p$-integrable tensor fields $P:\Omega \to \R^{n\times n}$ having $p$-integrable generalized $\nCurl$ and generalized vanishing tangential trace $P\,\tau_l=0$  on $\partial \Omega$, denoting by $\{\tau_l\}_{l=1,\ldots, n-1}$ a moving tangent frame on $\partial\Omega$, more precisely we have:
   \begin{equation*}
    \norm{ P }_{L^p(\Omega,\R^{n\times n})}\leq c\,\left(\norm{ \sym P }_{L^p(\Omega,\R^{n \times n})}+ \norm{ \nCurl P }_{L^p(\Omega,(\so(n))^n)}\right),
   \end{equation*}
  where the generalized $\nCurl$ is given by $(\nCurl P)_{ijk} \coloneqq \partial_i P_{kj}-\partial_j  P_{ki}$ and $c=c(n,p,\Omega)>0$.
\end{abstract}

\begin{otherlanguage}{french}
\begin{abstract}
On montre pour $n\ge2$ et $1<p<\infty$ une version $L^p$ de l'in\'{e}galit\'{e} g\'{e}n\'{e}ralis\'{e}e de Korn pour tous les champs de tenseurs incompatibles et $p$-int\'{e}grable $P:\Omega \to \R^{n\times n}$ avec rotationnel g\'{e}n\'{e}ralis\'{e}  $p$-int\'{e}grable et avec z\'{e}ro trace tangentielle 
 $P\,\tau_l=0$  sur $\partial\Omega$ o\`{u} $\{\tau_l\}_{l=1,\ldots, n-1}$ est un rep\`{e}re tangent sur $\partial\Omega$. Plus pr\'{e}cis\'{e}ment on a
  \begin{equation*}
  \norm{ P }_{L^p(\Omega,\R^{n\times n})}\leq c\,\left(\norm{ \sym P }_{L^p(\Omega,\R^{n \times n})}+ \norm{ \nCurl P }_{L^p(\Omega,(\so(n))^n)}\right),
 \end{equation*}
 o\`{u} les composantes  du rotationnel g\'{e}n\'{e}ralis\'{e} s'\'{e}crivent  $(\nCurl P)_{ijk} \coloneqq \partial_i P_{kj}-\partial_j  P_{ki}$ et $c=c(n,p,\Omega)>0$.
\end{abstract}
\end{otherlanguage}

\msc{Primary: 35A23; Secondary: 35B45, 35Q74, 46E35.}

\keywords{$W^{1,\,p}(\Curl)$-Korn's inequality, Poincar\'{e}'s inequality, Lions lemma, Ne\v{c}as estimate, incompatibility, $\Curl$-spaces, Maxwell problems, gradient plasticity, dislocation density, relaxed micromorphic model, exterior derivative}

\section{Introduction}
On montre pour $n\ge2$ et $1<p<\infty$ une version $L^p$ de l'in\'{e}galit\'{e} g\'{e}n\'{e}ralis\'{e}e de Korn pour tous les champs de tenseurs incompatibles et $p$-int\'{e}grable $P:\Omega \to \R^{n\times n}$ avec rotationnel g\'{e}n\'{e}ralis\'{e}  $p$-int\'{e}grable et avec z\'{e}ro trace tangentielle 
 $P\,\tau_l=0$  sur $\partial\Omega$ o\`{u} $\{\tau_l\}_{l=1,\ldots, n-1}$ est un rep\`{e}re tangent sur $\partial\Omega$. Plus pr\'{e}cis\'{e}ment on a
  \begin{equation*}
  \norm{ P }_{L^p(\Omega,\R^{n\times n})}\leq c\,\left(\norm{ \sym P }_{L^p(\Omega,\R^{n \times n})}+ \norm{ \nCurl P }_{L^p(\Omega,(\so(n))^n)}\right),
 \end{equation*}
 o\`{u} les composantes  du rotationnel g\'{e}n\'{e}ralis\'{e} s'\'{e}crivent  $(\nCurl P)_{ijk} \coloneqq \partial_i P_{kj}-\partial_j  P_{ki}$ et $c=c(n,p,\Omega)>0$.

 \section{Introduction}
In \cite{agn_lewintan2019KornLp} we have shown that there exists a constant $c=c(p,\Omega)>0$ such that
\begin{equation*}
 \norm{ P }_{L^p(\Omega,\R^{3\times3})}\leq c\,\left(\norm{ \sym P }_{L^p(\Omega,\R^{3\times3})} + \norm{ \Curl P }_{L^p(\Omega,\R^{3\times3})}\right)
\end{equation*}
holds for all tensor fields $P\in  W^{1,\,p}_0(\Curl; \Omega,\R^{3\times3})$, i.e., for all $P\in W^{1,\,p}(\Curl; \Omega,\R^{3\times3})$ with vanishing tangential trace $ P\times \nu=0 $ ( $\Leftrightarrow~  P\,\tau_l=0$ ) on $ \partial\Omega$ 
where  $\nu$ denotes the outward unit normal vector field and $\{\tau_l\}_{l=1,2,3}$  a moving tangent frame on $\partial\Omega$ and  $\Omega\subset\R^3$ is a bounded Lipschitz domain. The crucial ingredients for our proof were the Lions lemma and Ne\v{c}as estimate, the compactness of $W^{1,\,p}_0(\Omega)\subset\!\subset L^p(\Omega)$ and an algebraic identity in terms of components of the cross product of a skew-symmetric matrix with a vector. Recall, that for a bounded Lipschitz domain (i.e.~bounded open connected with Lipschitz boundary) $\Omega\subset\R^n$, the Lions lemma states that $f\in L^p(\Omega)$ if and only if $f\in W^{-1,\,p}(\Omega)$ and $\nabla f \in W^{-1,\,p}(\Omega,\R^n)$, which is equivalently expressed by the Ne\v{c}as estimate
\begin{equation}
 \norm{f}_{L^p(\Omega)}\le c\, (\norm{f}_{ W^{-1,\,p}(\Omega)}+\norm{\nabla f}_{ W^{-1,\,p}(\Omega,\R^n)})
\end{equation}
with a positive constant $c=c(p,n,\Omega)$. In fact, such an argumentation scheme is also used to prove the classical Korn inequalities, cf. e.g. \cite{Ciarlet2010, Ciarlet2013FAbook, Ciarlet2005korn, CMM2018,Geymonat86,agn_lewintan2019KornLp} and the discussions contained therein. However, \cite{Ciarlet2010, Ciarlet2013FAbook, Ciarlet2005korn, CMM2018,Geymonat86} focus on the compatible case, i.e.~$P=\D u$, where we deal with general square matrices $P\in\R^{n\times n}$, thus, the incompatible case.

Here, we extend our results from \cite{agn_lewintan2019KornLp} to the $n$-dimensional case, hence generalizing the main result from \cite{agn_neff2012maxwell} to the $L^p$-setting. This is, we prove 
 \begin{equation}
  \norm{ P }_{L^p(\Omega,\R^{n\times n})}\leq c\,\left(\norm{ \sym P }_{L^p(\Omega,\R^{n \times n})}+ \norm{ \nCurl P }_{L^p(\Omega,(\so(n))^n)}\right) \quad \forall \ P \in W^{1,\,p}_0(\nCurl; \Omega,\R^{n\times n}),
 \end{equation}
where  the generalized $\nCurl$ is given by $(\nCurl P)_{ijk} \coloneqq \partial_i P_{kj}-\partial_j  P_{ki}$ and the vanishing tangential trace condition reads \ $ P\,\tau_l=0 $ \ on $ \partial\Omega$
denoting by  $\{\tau_l\}_{l=1,\ldots, n-1}$ a moving tangent frame on $\partial\Omega$.
 
For a detailed motivation and definitions we refer to \cite{agn_lewintan2019KornLp} and the references contained therein.  Indeed, we follow the argumentation scheme presented in \cite{agn_lewintan2019KornLp} closely, emphasizing only the necessary modifications coming from the generalization of the vector product. The latter then provides an adequate generalization of the $\Curl$-operator to the $n$-dimensional setting. Especially, the generalized $\operatorname{curl}$ of vector fields can be seen as their exterior derivative, see also the discussion in \cite{agn_neff2012maxwell}.

\section{Notations}
Let $n\ge2$. For vectors $a,b\in\R^n$, we consider the scalar product  $\skalarProd{a}{b}\coloneqq\sum_{i=1}^n a_i\,b_i \in \R$, the (squared) norm  $\norm{a}^2\coloneqq\skalarProd{a}{a}$ and  the dyadic product  $a\otimes b \coloneqq \left(a_i\,b_j\right)_{i,j=1,\ldots,n}\in \R^{n\times n}$. Similarly, for matrices $P,Q\in\R^{n\times n}$ we define the scalar product $\skalarProd{P}{Q} \coloneqq\sum_{i,j=1}^n P_{ij}\,Q_{ij} \in \R$ and the (squared) Frobenius-norm $\norm{P}^2\coloneqq\skalarProd{P}{P}$.
Moreover, $P^T\coloneqq (P_{ji})_{i,j=1,\ldots,n}$ denotes the transposition of the matrix $P=(P_{ij})_{i,j=1,\ldots,n}$, which decomposes orthogonally into the symmetric part $\sym P \coloneqq \frac12\left(P+P^T\right)$ and the skew-symmetric part $\skew P \coloneqq \frac12\left(P-P^T\right)$.
The Lie-Algebra of skew-symmetric matrices is denoted by $\so(n)\coloneqq \{A\in\R^{n\times n}\mid A^T = -A\}$. The identity matrix is denoted by $\id$, so that the trace of a matrix $P$ is given by \ $\tr P \coloneqq \skalarProd{P}{\id}$.

\noindent
The cross product for vectors $a,b\in\R^n$ generalizes to
\begin{equation}
 a\ntimes b  \coloneqq \big(a_i\,b_j - a_j\, b_i\big)_{i,j=1,\ldots,n} = a\otimes b - b\otimes a= 2\cdot\skew(a\otimes b) \in \so(n)\cong \R^{\frac{n(n-1)}{2}}.
\end{equation}
Using the bijection $\axl:\so(3)\to\R^3$ we obtain back the standard cross product for $a,b\in\R^3$:
\begin{equation}
 a\times b = -\axl(a\ntimes b)
\end{equation}
where $\axl:\so(3)\to\R^3$ is given in such a way that
\begin{equation}
 A\,b =\axl(A)\times b \quad \forall\, A\in\so(3),\quad b\in\R^3.
\end{equation}

Like in $3$-dimensions it holds:

\begin{observation}\label{obs:parallel}
 Let $n\ge2$. For non-zero vectors $a,b\in\R^n$ we have $a\ntimes b=0$ if and only if $a$ and $b$ are parallel.
\end{observation}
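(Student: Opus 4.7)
The plan is to argue both directions directly from the componentwise formula $(a\ntimes b)_{ij}=a_i b_j - a_j b_i$.

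For the easy direction, I would assume $a$ and $b$ are parallel, say $b=\lambda a$ for some $\lambda\in\R$. Substituting into the defining formula gives $(a\ntimes b)_{ij}=a_i(\lambda a_j)-a_j(\lambda a_i)=0$ for every $i,j$, hence $a\ntimes b=0$.

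For the converse, I would suppose $a\ntimes b=0$, which means $a_i b_j=a_j b_i$ for all indices $i,j$. Since $a\ne0$ by hypothesis, there is some index $k$ with $a_k\ne 0$. Setting $\lambda\coloneqq b_k/a_k$, the relation $a_k b_j=a_j b_k$ for every $j$ rearranges to $b_j=\lambda a_j$, so $b=\lambda a$. Because $b\ne 0$ as well, $\lambda\ne 0$, and we conclude that $a$ and $b$ are parallel.

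There is essentially no obstacle here: the observation is the coordinate version of the well-known fact that the $2\times 2$ minors of the matrix with rows $a$ and $b$ vanish exactly when that matrix has rank at most one. The only mild subtlety is the need to pick an index $k$ with $a_k\ne 0$ to carry out the division, which is exactly where the assumption $a\ne0$ is used.
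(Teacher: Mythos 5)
Your proof is correct; both directions are sound, and the place where the non-vanishing hypothesis enters is correctly identified. The route differs slightly from the paper's: for the converse, the paper also starts from $a\otimes b = b\otimes a$ (equivalently $\skew(a\otimes b)=0$), but instead of working in components it applies this matrix identity to the vector $b$, obtaining $a\,\norm{b}^2 = \skalarProd{a}{b}\,b$, so that $a$ is exhibited as a multiple of $b$ using $b\neq 0$; your argument instead picks a coordinate $k$ with $a_k\neq 0$ and divides, exhibiting $b$ as a multiple of $a$ and using $a\neq 0$. The paper's contraction argument is coordinate-free and shorter, while your index computation is more elementary and makes explicit the interpretation of $a\ntimes b=0$ as the vanishing of the $2\times 2$ minors of the matrix with rows $a,b$, i.e.\ the rank-one condition; either hypothesis ($a\neq0$ or $b\neq0$) suffices, and each proof uses exactly one of them.
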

\begin{proof}
 Since the "if" part is obvious we show the "only if" direction:
 \begin{align*}
 a\ntimes b = 0 \quad &\Leftrightarrow \quad \skew (a\otimes b) = 0 \quad \Leftrightarrow \quad 
  a\otimes b = b \otimes a \quad \Rightarrow \quad (a\otimes b) b = (b\otimes a) b \\
  & \Leftrightarrow \quad a\,\norm{b}^2= b\,\skalarProd{a}{b}. \qedhere
 \end{align*}
\end{proof}

As in the $3$-dimensional case, we understand the vector product of a square-matrix $P\in\R^{n\times n}$  and a vector $b\in \R^n$ row-wise, i.e.
\begin{equation}
 P\ntimes b \coloneqq \big((P^T e_k) \ntimes b\big)_{k=1,\ldots,n} = \big(P_{ki}\,b_j-P_{kj}\,b_i\big)_{i,j,k=1,\ldots n} \in(\so(n))^n.
\end{equation}
For index notations we set: \quad $(P\ntimes b)_{ijk}\coloneqq P_{ki}\,b_j-P_{kj}\,b_i $.

Especially, for skew-symmetric matrices $A\in\so(n)$ we note the following crucial relation for our considerations:
\begin{equation}\label{eq:crucial}
\begin{split}
 (A\ntimes b)_{kij} - (A\ntimes b)_{kji} + (A\ntimes b)_{jik} &= A_{jk}\,b_i-A_{ji}\,b_k-(A_{ik}\,b_j-A_{ij}\,b_k)+A_{kj}\,b_i-A_{ki}\,b_j \\
 &\overset{\mathclap{(A_{ij}=-A_{ji})}}{=} \qquad 2 A_{ij}\,b_k \qquad \forall\, i,j,k=1,\ldots n
\end{split}
\end{equation}
with the direct consequence
\begin{observation}\label{obs:alg_statement}
 Let $n\ge2$. For $A\in\so(n)$ and a non-zero vector $b\in\R^n$ we have $A\ntimes b = 0$ if and only if $A=0$.
\end{observation}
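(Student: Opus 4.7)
The ``if'' direction is immediate from the definition, so the content is the ``only if'' direction. The plan is to read it off directly from the algebraic identity \eqref{eq:crucial}, which was set up precisely for this purpose.

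Assume $A\in\so(n)$ and $b\in\R^n$ with $b\neq 0$ satisfy $A\ntimes b=0$. Then in particular every component $(A\ntimes b)_{kij}$, $(A\ntimes b)_{kji}$, $(A\ntimes b)_{jik}$ vanishes. Substituting these zeros into the left-hand side of \eqref{eq:crucial} yields
\begin{equation*}
 2\,A_{ij}\,b_k \;=\; 0 \qquad \forall\, i,j,k=1,\ldots,n.
\end{equation*}
Since $b\neq 0$, there exists an index $k_0$ with $b_{k_0}\neq 0$; dividing by $2b_{k_0}$ in the identity for that $k_0$ gives $A_{ij}=0$ for all $i,j$, hence $A=0$.

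I do not expect any obstacle: the whole force of the statement has been absorbed into the identity \eqref{eq:crucial}, which is a linear combination of three components of $A\ntimes b$ that happens to equal $2A_{ij}b_k$ thanks to the skew-symmetry $A_{ij}=-A_{ji}$. The only thing to be careful about is not to confuse the index ordering in $(\,\cdot\,)_{ijk}$ when invoking the identity, but once the three components are correctly identified as zero the conclusion is one line.
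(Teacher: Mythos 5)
Your argument is correct and is exactly the paper's intended one: the paper states the observation as a direct consequence of the identity \eqref{eq:crucial}, which is precisely how you use it. Setting the three components of $A\ntimes b$ to zero in \eqref{eq:crucial} and dividing by a nonzero $b_{k_0}$ gives $A=0$, so there is nothing to add.
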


\noindent
Let $\Omega\subset \R^n$, $n\ge2$, be a domain. As in $\R^3$ we formally introduce the generalized $\ncurl$ of a vector field $v\in\mathscr{D}'(\Omega,\R^n)$
via
\begin{equation}
 \ncurl\, v \coloneqq v\ntimes (-\nabla) = \nabla \ntimes v = -2 \cdot \skew (v\otimes \nabla) = -2 \cdot \skew (\D v) \in \so(n).
\end{equation}
Furthermore, for $(n\times n)$-square matrix fields we understand this operation row-wise:
\begin{equation}
 \nCurl P \coloneqq P \ntimes (-\nabla) = \big(\ncurl (P^Te_k)\big)_{k=1,\ldots,n} = \big(\partial_i P_{kj}-\partial_j  P_{ki}\big)_{i,j,k=1,\ldots,n} \in (\so(n))^n .
\end{equation}
For index notations we define: \quad $(\nCurl P)_{ijk} \coloneqq \partial_i P_{kj}-\partial_j  P_{ki}$.~  Of course, \quad $ \nCurl \D v \equiv 0$.\\
Moreover, we make use of the generalized divergence $\Div$ for matrix fields $P\in\mathscr{D}'(\Omega,\R^{n\times n})$ row-wise, via
\begin{equation}
 \Div P \coloneqq \big(\div (P^Te_k)\big)_{k=1,\ldots,n}\,.
\end{equation}

In fact, the crucial relation \eqref{eq:crucial} implies that the full gradient of a skew-symmetric matrix is already determined by its generalized $\nCurl$, cf. also \cite[p.\,155]{agn_munch2008curl}:
\begin{corollary}\label{cor:lin_combi}
Let $n\ge2$.  For $A\in\mathscr{D}'(\Omega,\so(n))$ the entries of the gradient $\D A$ are linear combinations of the entries from $\nCurl A$.
\end{corollary}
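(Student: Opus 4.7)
The plan is to take the algebraic identity \eqref{eq:crucial} and substitute the constant vector $b$ by the formal differential operator $-\nabla$. Since $\nCurl A$ is nothing but the row-wise application of $A \ntimes (-\nabla)$, the identity should translate, term by term, into an expression for $\partial_k A_{ij}$ in terms of entries of $\nCurl A$.

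Concretely, for each fixed triple of indices $i,j,k \in \{1,\ldots,n\}$, I would form the combination
\begin{equation*}
 (\nCurl A)_{kij} - (\nCurl A)_{kji} + (\nCurl A)_{jik}
\end{equation*}
and expand it using $(\nCurl A)_{\alpha\beta\gamma} = \partial_\alpha A_{\gamma\beta} - \partial_\beta A_{\gamma\alpha}$. The resulting six partial derivatives, after collapsing those which coincide by the skew-symmetry $A_{\alpha\beta}=-A_{\beta\alpha}$, should collapse to $-2\,\partial_k A_{ij}$, exactly mirroring \eqref{eq:crucial} with $b_k \mapsto -\partial_k$. That gives the explicit formula
\begin{equation*}
 \partial_k A_{ij} = -\tfrac{1}{2}\bigl[(\nCurl A)_{kij} - (\nCurl A)_{kji} + (\nCurl A)_{jik}\bigr],
\end{equation*}
which is the required linear combination.

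Since the relation \eqref{eq:crucial} is purely algebraic in the entries of $A$ (it only uses $A_{ij}=-A_{ji}$) and the symbol $b$ appears linearly, substituting the distributional derivative $-\partial$ in place of $b$ is legitimate: at the level of $\mathscr{D}'(\Omega)$, both sides are tested against smooth compactly supported functions and the equality of the coefficients is preserved. Thus no regularity of $A$ is needed beyond $A\in\mathscr{D}'(\Omega,\so(n))$. I expect no real obstacle here; the only thing to watch is careful bookkeeping of the indices in the expansion, since a sign error would obscure the cancellation into $-2\,\partial_k A_{ij}$.
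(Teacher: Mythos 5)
Your proposal is correct and takes essentially the same route as the paper: the paper's proof likewise substitutes $b$ by $-\nabla$ in \eqref{eq:crucial} to obtain $(\nCurl A)_{kij} - (\nCurl A)_{kji} + (\nCurl A)_{jik} = -2\,\partial_k A_{ij}$, which is exactly your explicit linear combination. Your additional remark that the substitution is legitimate in $\mathscr{D}'(\Omega)$ because the identity is linear in $b$ and purely algebraic in the entries of $A$ is a fair (and correct) elaboration of what the paper leaves implicit.
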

\begin{proof}
 Replacing $b$ by $-\nabla$ in \eqref{eq:crucial} we see that
 \begin{equation*}
  (\nCurl A)_{kij} - (\nCurl A)_{kji} + (\nCurl A)_{jik}  = -2\,\partial _k A_{ij}.\qedhere
 \end{equation*}
\end{proof}
\noindent
This control of all first partial derivatives of a skew-symmetric matrix field in terms of the generalized $\nCurl$ then immediately yields in all dimensions 
\begin{corollary}\label{cor:Nye}
Let $n\ge2$.  For $A\in L^p(\Omega,\so(n))$ we have $\nCurl A \equiv 0$ in the distributional sense if and only if $A=\operatorname{const}$ almost everywhere in $\Omega$.
\end{corollary}

\subsection{Function spaces}

Having above relations at hand we can now catch up the arguments from \cite{agn_lewintan2019KornLp}. For that purpose let us define for $n\ge2$ and $1<p<\infty$ the space
\begin{subequations}
\begin{align}
  W^{1,\,p}(\nCurl; \Omega,\R^{n\times n}) &\coloneqq \{P\in L^p(\Omega,\R^{n\times n})\mid \nCurl P \in L^p(\Omega,(\so(n))^n)\}
 \shortintertext{equipped with the norm}
 \norm{P}_{ W^{1,\,p}(\nCurl; \Omega,\R^{n\times n})}&\coloneqq \left(\norm{P}^p_{L^p(\Omega,\R^{n\times n})} + \norm{\nCurl P}^p_{L^p(\Omega,(\so(n))^n)} \right)^{\frac{1}{p}}.
\end{align}
\end{subequations}
By definition of the norm in the dual space, we have
\begin{equation}\label{eq:Curlemddual}
\begin{split}
 P\in L^p(\Omega,\R^{n\times n}) \quad \Rightarrow \quad &\nCurl P\in W^{-1,\,p}(\Omega,(\so(n))^n)\\
 \text{ with }  & \norm{\nCurl P}_{W^{-1,\,p}(\Omega,(\so(n))^n)} \leq c\, \norm{P}_{L^p(\Omega,\R^{n\times n})}.
\end{split}
\end{equation}
\noindent
Furthermore, we consider the subspace
\begin{equation}
 \begin{split}
  W^{1,\,p}_0(\nCurl; \Omega,\R^{n\times n}) \coloneqq\, &\{P\in  W^{1,\,p}(\nCurl; \Omega,\R^{n\times n}) \mid P \ntimes \nu = 0 \text{ on } \partial \Omega\}\\
   =\,& \{P\in  W^{1,\,p}(\nCurl; \Omega,\R^{n\times n}) \mid P\,\tau_l = 0 \text{ on } \partial \Omega \text{ for all $l=1,\ldots,n-1$}\},
 \end{split}
\end{equation}
where $\nu$ stands for the outward unit normal vector field and $\{\tau_l\}_{l=1,\ldots, n-1}$ denotes a moving tangent frame  on $\partial\Omega$. Here, the generalized tangential trace $P\ntimes \nu$ is understood in the sense of $W^{-\frac1p,\, p}(\partial \Omega,\R^{n\times n})$ which is justified by partial integration, so that its trace is defined by
\begin{align}
 \forall\ k=1,\ldots n,\  \forall\ Q\in  W^{1-\frac{1}{p'},\,p'}(\partial\Omega,\R^{n\times n}):& \\ \skalarProd{(P^Te_k)\ntimes \nu}{Q}_{\partial \Omega} &=  \int_{\Omega}\skalarProd{\ncurl (P^Te_k)}{\widetilde{Q}}_{\R^{n\times n}}+2\skalarProd{P^Te_k}{\Div (\skew \widetilde{Q})}_{\R^n}\, \intd{x} \notag
\end{align}
having denoted by $\widetilde{Q}\in W^{1,\,p'}(\Omega,\R^{n\times n})$ any extension of $Q$ in $\Omega$, where, $\skalarProd{.}{.}_{\partial\Omega}$ indicates the duality pairing between $W^{-\frac1p,\,p}(\partial\Omega,\R^{n\times n})$ and $W^{1-\frac{1}{p'},\,p'}(\partial\Omega,\R^{n\times n})$. Indeed, for $P,Q\in C^1(\Omega,\R^{n\times n})\cap C^0(\overline{\Omega},\R^{n\times n})$ we have
\begin{equation}\label{eq:prelim}
\begin{split}
 \frac12\skalarProd{(P^Te_k)\ntimes \nu}{Q}_{\R^{n\times n}}& = \skalarProd{\skew ((P^Te_k)\otimes \nu)}{Q}_{\R^{n\times n}}=\skalarProd{(P^Te_k)\otimes \nu}{\skew Q}_{\R^{n\times n}} \\&= \sum_{i,j=1}^n  P_{ki}\,\nu_j(\skew Q)_{ij} = -\sum_{i,j=1}^n \nu_j\, (\skew Q)_{ji}\,P_{ki} \\
  &=- \skalarProd{\nu}{(\skew Q)\, (P^Te_k)}_{\R^n},
\end{split}
\end{equation}
so that using the divergence-theorem, for $k=1,\ldots,n$ we have\footnote{This partial integration formula slightly differs from the situation in $\R^3$ since the generalized $\nCurl$ has image in $(\so(n))^n$ which corresponds to $\R^{n\times n}$ only for $n=3$.} 
\begin{align}
 \int_{\partial \Omega} \skalarProd{(P^Te_k)\ntimes \nu}{Q}_{\R^{n\times n}} \,\intd{S} & \overset{\eqref{eq:prelim}}{=} -2 \int_{\partial \Omega} \skalarProd{\nu}{(\skew Q)\, (P^Te_k)}_{\R^n} \, \intd{S} = -2 \int_{\Omega}\div ((\skew Q)\, (P^Te_k))\,\intd{x}\notag \\
 & = -2 \int_{\Omega} \skalarProd{\Div [(\skew Q)^T]}{P^Te_k}_{\R^n}+ \skalarProd{(\skew Q)}{\D (P^T e_k)}_{\R^{n\times n}}\,\intd{x}\\
 & = \int_{\Omega}\skalarProd{\ncurl (P^Te_k)}{Q}_{\R^{n\times n}}+2\skalarProd{P^Te_k}{\Div (\skew Q)}_{\R^n}\, \intd{x}.\notag
\end{align}

Further, following \cite{agn_lewintan2019KornLp} we introduce also the space $W^{1,\,p}_{\Gamma,0}(\Curl;\Omega,\R^{n\times n})$  of functions with vanishing tangential trace only on a relatively open (non-empty) subset $\Gamma \subseteq\partial \Omega$ of the boundary by completion of  $C^\infty_{\Gamma,0}(\Omega,\R^{n\times n})$ with respect to the $W^{1,\,p}(\Curl;\Omega,\R^{n\times n})$-norm.

\begin{remark}[Tangential trace condition]\label{rem:trace}
 Note, that the vanishing of the tangential trace $P\ntimes \nu$ at some point is equivalent to $P\,\tau_l=0$ for all $l=1,\ldots,n-1$, denoting by $\{\tau_l\}_{l=1,\ldots, n-1}$ a  frame  of the corresponding tangent space. Indeed, by Observation \ref{obs:parallel}  we have 
 \begin{equation*}
  \begin{split}
     P\ntimes \nu = 0 \quad &\Leftrightarrow \quad \skew((P^Te_k)\otimes \nu)=0, \ k =1,\ldots, n, \quad \Leftrightarrow \quad
     (P^Te_k) \text{ parallel to $\nu$ for all $k=1,\ldots,n$}\\
     &\Leftrightarrow \quad   \skalarProd{P^Te_k}{\tau_l}=0 \quad \forall \ l=1,\ldots,n-1, \ \forall \ k=1,\ldots,n \quad \Leftrightarrow   \quad   P\,\tau_l=0 \quad \forall \ l=1,\ldots,n-1\,.
  \end{split}
 \end{equation*}
\end{remark}

\section{Main results}
We will now state the results from \cite{agn_lewintan2019KornLp} in the $n$-dimensional case, for details of the proofs we refer to the corresponding results therein:

\begin{lemma}
   Let $n\ge2$, $\Omega \subset \R^n$ be a bounded Lipschitz domain and $1<p<\infty$. Then $P\in\mathscr{D}'(\Omega,\R^{n\times n})$, $\sym P\in L^p(\Omega,\R^{n\times n})$ and $\nCurl P \in W^{-1,\,p}(\Omega,(\so(n))^n)$ imply $P\in L^p(\Omega,\R^{n\times n})$. Moreover, we have the estimate
  \begin{equation}\label{eq:basic}
   \norm{P}_{L^p(\Omega,\R^{n\times n})} \leq c\, \left(\norm{\skew P}_{W^{-1,\,p}(\Omega,\R^{n\times n})}+\norm{\sym P}_{L^p(\Omega,\R^{n\times n})}+ \norm{ \nCurl P }_{W^{-1,\,p}(\Omega,(\so(n))^n)}\right),
  \end{equation}
  with a constant $c=c(n,p,\Omega)>0$.
\end{lemma}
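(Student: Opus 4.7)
The strategy is to decompose $P = \sym P + \skew P$, treat the symmetric part as already controlled in $L^p$ by hypothesis, and reduce the problem to showing that the skew part $A\coloneqq \skew P$ lies in $L^p$ with the right estimate. The key tool is Corollary \ref{cor:lin_combi}: for skew-symmetric distributions, all first partial derivatives are linear combinations of entries of the generalized $\nCurl$. This reduces controlling $\D A$ to controlling $\nCurl A$.

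First I would write $\nCurl A = \nCurl P - \nCurl(\sym P)$. The term $\nCurl(\sym P)$ is in $W^{-1,p}(\Omega,\so(n)\times \R^n)$ with norm bounded by $c\,\norm{\sym P}_{L^p}$ thanks to the embedding \eqref{eq:Curlemddual} applied to $\sym P$. Combining with the hypothesis that $\nCurl P\in W^{-1,p}$, I obtain
\begin{equation*}
 \norm{\nCurl A}_{W^{-1,p}(\Omega,\so(n)\times\R^n)} \leq \norm{\nCurl P}_{W^{-1,p}(\Omega,\so(n)\times\R^n)} + c\,\norm{\sym P}_{L^p(\Omega,\R^{n\times n})}.
\end{equation*}
By Corollary \ref{cor:lin_combi}, namely the identity $-2\,\partial_k A_{ij} = (\nCurl A)_{kij}-(\nCurl A)_{kji}+(\nCurl A)_{jik}$, each entry of $\D A$ is a fixed linear combination of entries of $\nCurl A$, and hence $\D A \in W^{-1,p}(\Omega,\R^{n\times n\times n})$ with
\begin{equation*}
 \norm{\D A}_{W^{-1,p}} \leq c\,\bigl(\norm{\nCurl P}_{W^{-1,p}} + \norm{\sym P}_{L^p}\bigr).
\end{equation*}

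Now $A=\skew P$ is an $\so(n)$-valued distribution on $\Omega$ whose gradient lies in $W^{-1,p}$, and the hypothesis together with the appearance of $\norm{\skew P}_{W^{-1,p}}$ on the right-hand side ensures $A\in W^{-1,p}(\Omega,\R^{n\times n})$ as well. Applying the Lions lemma / Ne\v{c}as estimate componentwise on the bounded Lipschitz domain $\Omega$, I conclude $A\in L^p(\Omega,\R^{n\times n})$ with
\begin{equation*}
 \norm{\skew P}_{L^p} \leq c\,\bigl(\norm{\skew P}_{W^{-1,p}} + \norm{\D A}_{W^{-1,p}}\bigr) \leq c\,\bigl(\norm{\skew P}_{W^{-1,p}} + \norm{\sym P}_{L^p} + \norm{\nCurl P}_{W^{-1,p}}\bigr).
\end{equation*}
The triangle inequality $\norm{P}_{L^p}\leq \norm{\sym P}_{L^p}+\norm{\skew P}_{L^p}$ then yields \eqref{eq:basic}.

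The only genuinely nontrivial step is the Ne\v{c}as estimate itself, which I invoke as a black box; apart from that, the argument is a bookkeeping exercise built on the purely algebraic Corollary \ref{cor:lin_combi}. The main conceptual obstacle — and the whole point of the generalized $\nCurl$ — is that the algebraic identity \eqref{eq:crucial} must actually reconstruct $\D A$ from $\nCurl A$ in $n$ dimensions; in three dimensions this corresponds to the standard identity relating $\nabla A$ to $\operatorname{Curl} A$ for $A\in\so(3)$-valued fields, and the excerpt's derivation of \eqref{eq:crucial} supplies the needed dimension-free replacement.
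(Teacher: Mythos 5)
Your proposal is correct and follows essentially the same route as the paper: control $\D(\skew P)$ through Corollary \ref{cor:lin_combi} combined with $\nCurl (\skew P)=\nCurl P-\nCurl(\sym P)$ and \eqref{eq:Curlemddual}, then apply the Lions lemma / Ne\v{c}as estimate to $\skew P$ and conclude by the triangle inequality, which is exactly the paper's one-line proof (referring to Lemma 3.1 of the cited work). The only loose point, shared with the paper's own formulation, is the membership $\skew P\in W^{-1,\,p}(\Omega,\R^{n\times n})$ needed to invoke the stated version of the Lions lemma, which you correctly flag as implicit in the right-hand side of \eqref{eq:basic}.
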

\begin{proof}
 Use Corollary \ref{cor:lin_combi} and apply the Lions lemma and Ne\v{c}as estimate, \cite[Theorem 2.6]{agn_lewintan2019KornLp} to $\skew P$, cf. proof of \cite[Lemma 3.1]{agn_lewintan2019KornLp}.
\end{proof}

\noindent
The general Korn-type inequalities then follow by eliminating the first term on the right-hand side of \eqref{eq:basic}:

\begin{theorem}\label{thm:main1}
 Let $n\ge2$,  $\Omega \subset \R^n$ be a bounded Lipschitz domain and $1<p<\infty$. There exists a constant $c=c(n,p,\Omega)>0$, such that for all $P\in  L^p(\Omega,\R^{n\times n})$ we have
 \begin{equation}\label{eq:Korn_Lp_w}
   \inf_{A\in\so(n)}\norm{P-A}_{L^p(\Omega,\R^{n\times n})}\leq c\,\left(\norm{ \sym P }_{L^p(\Omega,\R^{n\times n})}+ \norm{ \nCurl P }_{W^{-1,\,p}(\Omega,(\so(n))^n)}\right).
 \end{equation}
\end{theorem}
\begin{proof}
 By Corollary \ref{cor:Nye} the kernel of the right-hand side consists only of constant skew-symmetric matrices:
 \begin{align}\label{eq:kernel}
   K&\coloneqq\{ P\in  L^p( \Omega,\R^{n\times n}) \mid \sym P = 0 \text{ a.e.~and } \nCurl P = 0 \text{ in the distributional sense}\} \notag\\
   &= \{P= A \text{ a.e.} \mid A\in\so(n)\}.
 \end{align}
Then there exist $M\coloneqq \dim K = \frac{n(n-1)}{2}$ linear forms $\ell_{\alpha}$ on $ L^p(\Omega,\R^{n\times n})$ such that $P\in K$ is equal to $0$ if and only if $\ell_\alpha(P)=0$ for all $\alpha=1,\ldots,M$. Exploiting the compactness $L^p(\Omega,\R^{n\times n})\subset\!\subset  W^{-1,\,p}(\Omega,\R^{n\times n})$ allows us to eliminate the first term on the right-hand side of \eqref{eq:basic} so that we arrive at
\begin{equation}\label{eq:hilfsungl}
 \norm{P}_{L^p(\Omega,\R^{n\times n})}\leq c\,\left(\norm{ \sym P }_{L^p(\Omega,\R^{n\times n})}+ \norm{ \nCurl P }_{W^{-1,\,p}(\Omega,(\so(n))^n)}+\sum_{\alpha=1}^M\abs{\ell_\alpha(P)} \right).
\end{equation}
Considering $P-A_P$ in \eqref{eq:hilfsungl}, where the skew-symmetric matrix $A_P\in K$ is chosen in such a way that $\ell_\alpha(P-A_P)=0$ for all $\alpha=1,\ldots,M$, then yields the conclusion, cf. proof of \cite[Theorem 3.4]{agn_lewintan2019KornLp}.
\end{proof}

\noindent
Moreover, the kernel is killed by the tangential trace condition $P\ntimes \nu \equiv 0$ (or $P\,\tau_l\equiv 0$ for all $l=1,\ldots,n-1$), cf. \eqref{eq:kernel} together with Observation \ref{obs:alg_statement} (and also Remark \ref{rem:trace}), so that we arrive at

\begin{theorem}\label{thm:main2}
Let $n\ge2$,  $\Omega \subset \R^n$ be a bounded Lipschitz domain and $1<p<\infty$. There exists a constant $c=c(n,p,\Omega)>0$, such that for all $P\in  W^{1,\,p}_0(\nCurl; \Omega,\R^{n\times n})$ we have
 \begin{equation}\label{eq:Korn_Lp_thm}
     \norm{ P }_{L^p(\Omega,\R^{n\times n})}\leq c\,\left(\norm{ \sym P }_{L^p(\Omega,\R^{n \times n})}+ \norm{ \nCurl P }_{L^p(\Omega,(\so(n))^n)}\right).
 \end{equation}
\end{theorem}
\begin{proof}
 Having Observation \ref{obs:alg_statement} we can closely follow the proof of \cite[Theorem 3.5]{agn_lewintan2019KornLp}.
\end{proof}

\noindent
Similar argumentations show that \eqref{eq:Korn_Lp_thm} also holds true for functions with vanishing tangential trace only on a relatively open (non-empty) subset $\Gamma\subseteq\partial\Omega$ of the boundary, namely

\begin{theorem}\label{thm:main3}
 Let $n\ge2$,  $\Omega \subset \R^n$ be a bounded Lipschitz domain and $1<p<\infty$. There exists a constant $c=c(n,p,\Omega)>0$, such that for all $P\in  W^{1,\,p}_{\Gamma,0}(\nCurl; \Omega,\R^{n\times n})$ we have
 \begin{equation}\label{eq:Korn_Lp_thm3}
     \norm{ P }_{L^p(\Omega,\R^{n\times n})}\leq c\,\left(\norm{ \sym P }_{L^p(\Omega,\R^{n\times n})}+ \norm{ \nCurl P }_{L^p(\Omega,(\so(n))^n)}\right).
 \end{equation}
\end{theorem}

\noindent
Furthermore, Theorem \ref{thm:main3} reduces for compatible $P=\D u$ to a tangential Korn inequality (Corollary \ref{cor:tKorn}) and for skew-symmetric $P=A$ to a Poincar\'{e} inequality in arbitrary dimensions (Corollary \ref{cor:Poin}):
\begin{corollary}\label{cor:tKorn}
  Let $n\ge2$,  $\Omega \subset \R^n$ be a bounded Lipschitz domain and $1<p<\infty$. There exists a constant $c=c(n,p,\Omega)>0$, such that for all $u\in  W^{1,\,p}_{\Gamma,0}(\Omega,\R^{n})$ we have
   \begin{equation}
   \norm{\D u }_{L^p(\Omega,\R^{n\times n})} \le c\, \norm{\sym \D u}_{L^p(\Omega,\R^n)} \quad \text{with } \D u \ntimes \nu = 0 \quad \text{on $\Gamma$.}
 \end{equation}
\end{corollary}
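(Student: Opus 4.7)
The plan is to specialize Theorem \ref{thm:main3} to the compatible choice $P=\D u$. Since the generalized $\nCurl$ of any gradient vanishes identically, as was already noted after the definition of $\nCurl$, namely $(\nCurl \D u)_{ijk}=\partial_i\partial_j u_k-\partial_j\partial_i u_k=0$ (first for smooth $u$, then for distributions by density), the second term on the right-hand side of \eqref{eq:Korn_Lp_thm3} is automatically zero. Consequently, if we can place $\D u$ into the space $W^{1,\,p}_{\Gamma,0}(\nCurl;\Omega,\R^{n\times n})$, Theorem \ref{thm:main3} delivers
\begin{equation*}
 \norm{\D u}_{L^p(\Omega,\R^{n\times n})} \leq c\,\bigl(\norm{\sym \D u}_{L^p(\Omega,\R^{n\times n})} + \norm{\nCurl \D u}_{L^p(\Omega,\so(n)\times\R^n)}\bigr) = c\,\norm{\sym \D u}_{L^p(\Omega,\R^{n\times n})},
\end{equation*}
which, up to the identification $\nabla u=\D u$, is exactly the claimed inequality.

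It remains to justify the tangential boundary condition $\D u\,\ntimes\nu=0$ on $\Gamma$ for $u\in W^{1,\,p}_{\Gamma,0}(\Omega,\R^n)$. For $u\in C^\infty_{\Gamma,0}(\Omega,\R^n)$ the condition $u=0$ on $\Gamma$ forces every tangential derivative to vanish there, i.e.\ $(\D u)\,\tau_l=\partial_{\tau_l} u = 0$ on $\Gamma$ for $l=1,\ldots,n-1$; by Remark \ref{rem:trace} this is equivalent to $\D u\,\ntimes\nu=0$ on $\Gamma$. Since the gradient operator is continuous from $W^{1,\,p}(\Omega,\R^n)$ into $L^p(\Omega,\R^{n\times n})$ and since $\nCurl\D u\equiv 0$, the map $u\mapsto \D u$ is continuous from $W^{1,\,p}(\Omega,\R^n)$ into $W^{1,\,p}(\nCurl;\Omega,\R^{n\times n})$, so density of $C^\infty_{\Gamma,0}$ in $W^{1,\,p}_{\Gamma,0}$ transports the tangential-trace property from the smooth approximants to $\D u$ itself, placing it into $W^{1,\,p}_{\Gamma,0}(\nCurl;\Omega,\R^{n\times n})$.

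The only delicate point is this last density/continuity step; once it is in place, everything else is a direct substitution. No algebraic identity beyond Corollary \ref{cor:lin_combi} (already absorbed into Theorem \ref{thm:main3}) and Remark \ref{rem:trace} is needed, and no compactness argument is used beyond the one already built into Theorem \ref{thm:main3}.
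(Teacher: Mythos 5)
Your proposal is correct and follows exactly the route the paper intends: the paper proves Corollary \ref{cor:tKorn} simply by specializing Theorem \ref{thm:main3} to the compatible field $P=\D u$, using $\nCurl \D u\equiv 0$ and the fact that $u\in W^{1,\,p}_{\Gamma,0}(\Omega,\R^n)$ places $\D u$ in $W^{1,\,p}_{\Gamma,0}(\nCurl;\Omega,\R^{n\times n})$ (via the $C^\infty_{\Gamma,0}$ approximants, whose gradients vanish near $\Gamma$ and converge in the $W^{1,\,p}(\nCurl)$-norm). Your density/continuity justification of the tangential trace condition is exactly the point the paper leaves implicit, so the two arguments coincide.
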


\begin{remark}
  On $\Gamma$  the boundary condition $\D u \ntimes \nu = 0$  is equivalent to $\D u \,\tau_l=0$ for all $l=1,\ldots,n-1$ and is, e.g., fulfilled if $u_{|\Gamma}\equiv \operatorname{const}.$, see Remark \ref{rem:trace}.
\end{remark}

\begin{corollary}\label{cor:Poin}
  Let $n\ge2$, $\Omega \subset \R^n$ be a bounded Lipschitz domain and $1<p<\infty$. There exists a constant $c=c(n,p,\Omega)>0$, such that for all $A\in  W^{1,\,p}_{\Gamma,0}(\nCurl;\Omega,\so(n))=W^{1,\,p}_{\Gamma,0}(\Omega,\so(n))$ we have
 \begin{equation}
\norm{ A}_{L^p(\Omega,\so(n))}\le c\, \norm{\nCurl A}_{L^p(\Omega,(\so(n))^n)} \quad \text{with }  A \ntimes \nu = 0 \ \overset{*}{\Leftrightarrow} \ A=0 \quad \text{on $\Gamma$.}
 \end{equation}
\end{corollary}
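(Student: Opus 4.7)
The plan is to reduce the statement to Theorem~\ref{thm:main3} specialised to skew-symmetric fields. Choosing $P = A \in \so(n)$ makes $\sym A \equiv 0$ identically, so the right-hand side of \eqref{eq:Korn_Lp_thm3} collapses to $c\,\norm{\nCurl A}_{L^p(\Omega,\so(n)\times\R^n)}$, and the Poincar\'e-type inequality falls out immediately once $A$ has been placed in $W^{1,p}_{\Gamma,0}(\nCurl;\Omega,\R^{n\times n})$.

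The first step is therefore to justify the identification $W^{1,p}_{\Gamma,0}(\nCurl;\Omega,\so(n)) = W^{1,p}_{\Gamma,0}(\Omega,\so(n))$ that appears in the statement itself. Here Corollary~\ref{cor:lin_combi} does the work: every entry of $\D A$ is an explicit linear combination of entries of $\nCurl A$ whenever $A$ is skew-symmetric, so the two norms $\norm{A}_{L^p} + \norm{\nCurl A}_{L^p}$ and $\norm{A}_{L^p} + \norm{\D A}_{L^p}$ are equivalent on $\so(n)$-valued fields. The completions of $C^\infty_{\Gamma,0}(\Omega,\so(n))$ with respect to either norm therefore agree, and with them the two function spaces under discussion.

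The second step handles the starred equivalence $A\ntimes\nu = 0 \Leftrightarrow A = 0$ on $\Gamma$. This is a direct pointwise application of Observation~\ref{obs:alg_statement}: since $\nu$ is a non-zero unit vector at every boundary point, the algebraic identity $A\ntimes\nu = 0$ forces $A = 0$ there. Note that this is strictly stronger than the general tangential condition recorded in Remark~\ref{rem:trace}, reflecting the algebraic peculiarity of $\so(n)$ that a non-zero skew-symmetric matrix cannot have all of its rows parallel to a single prescribed direction.

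I do not anticipate a genuine obstacle. Once the space identification has been recorded and the boundary condition read through Observation~\ref{obs:alg_statement}, plugging $P = A$ into Theorem~\ref{thm:main3} and dropping the vanishing $\sym$-term closes the argument in a single line; everything else is bookkeeping.
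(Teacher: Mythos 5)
Your proposal is correct and follows the paper's route: the inequality is exactly Theorem~\ref{thm:main3} specialised to $P=A$ with $\sym A\equiv 0$, and the identification $W^{1,\,p}_{\Gamma,0}(\nCurl;\Omega,\so(n))=W^{1,\,p}_{\Gamma,0}(\Omega,\so(n))$ is indeed the content of Corollary~\ref{cor:lin_combi} (equivalence of the $\nCurl$- and full-gradient norms on $\so(n)$-valued fields). The only divergence is cosmetic: for the starred equivalence you invoke Observation~\ref{obs:alg_statement} pointwise (via the identity \eqref{eq:crucial}), whereas the paper's remark argues through Remark~\ref{rem:trace} plus the parity of the rank of a skew-symmetric matrix; both are valid, and your version is arguably the more direct one since Observation~\ref{obs:alg_statement} states precisely that $A\ntimes\nu=0$ with $\nu\neq 0$ forces $A=0$.
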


\begin{remark}
 The equivalence of condition $\ast$ is seen in the following way: In any  dimension the rank of the skew-symmetric matrix $A$ is an even number, cf. \cite[p. 30]{vdW-Alg2}, and by Remark \ref{rem:trace} the rows $A^Te_k$ are all parallel ($\Leftrightarrow ~ A\,\tau_l=0$ for all $l=1,\ldots,n-1$) such that the rank of $A$ is not greater then $1$.
\end{remark}
 
\subsection*{Acknowledgement}
This work was initiated in the framework of the Priority Programme SPP 2256 'Variational Methods for Predicting Complex Phenomena in Engineering Structures and Materials' funded by the Deutsche Forschungsgemeinschaft (DFG, German research foundation), Project-ID 422730790. The second author was supported within the project 'A variational scale-dependent transition scheme - from Cauchy elasticity to the relaxed micromorphic continuum’ (Project-ID 440935806). Moreover, both authors were supported in the Project-ID 415894848 by the Deutsche Forschungsgemeinschaft.

\printbibliography

\end{document}